\newtheorem{theorem}{Theorem}[section]
\newtheorem{corollary}[theorem]{Corollary}
\theoremstyle{remark}
\newtheorem{remark}[theorem]{Remark}
\begin{document}

\baselineskip=15.5pt

\title[The full automorphism group of $\overline{T}$]{The full automorphism group 
of $\overline{T}$}

\author[I. Biswas]{Indranil Biswas}

\address{School of Mathematics, Tata Institute of Fundamental
Research, Homi Bhabha Road, Mumbai 400005, India}

\email{indranil@math.tifr.res.in}

\author[S. S. Kannan]{Subramaniam Senthamarai Kannan}

\address{Chennai Mathematical Institute, H1, SIPCOT IT Park, Siruseri,
Kelambakkam 603103, India}

\email{kannan@cmi.ac.in}

\author[D. S. Nagaraj]{Donihakalu Shankar Nagaraj}

\address{The Institute of Mathematical Sciences, CIT
Campus, Taramani, Chennai 600113, India}

\email{dsn@imsc.res.in}

\subjclass[2010]{14L10, 14L30}

\keywords{Wonderful compactification, closure of maximal torus, automorphism group.}

\begin{abstract}
Let $\overline G$ be the wonderful compactification of 
a simple affine algebraic group $G$ of adjoint type defined over $\mathbb C.$
Let ${\overline T}\,\subset\,
\overline G$ be the closure of a maximal torus $T\, \subset\, G.$
We prove that the group of all automorphisms of the variety $\overline T$ is
the semi-direct product $N_G(T)\rtimes D,$ where $N_G(T)$ is the normalizer
of $T$ in $G$ and $D$ is the group of all automorphisms of the Dynkin diagram,
if $G \,\not=\, {\rm PSL}(2,\mathbb{C})$. Note that if 
$G \,=\, {\rm PSL}(2,\mathbb{C})$, then 
$\overline{T}\, =\, {\mathbb C}{\mathbb P}^1$ and so in this case
$\text{Aut}(\overline T)\,=\, {\rm PSL}(2,\mathbb{C})$. \\

\textsc{R\'esum\'e.}
\textbf{Le groupe complet des automorphismes de $\overline T$.}\,
Soit $\overline G$ la compactification magnifique d'un groupe alg\'ebrique affine simple 
$G$ de type adjoint d\'efini sur $\mathbb C$. Soit ${\overline T}\,\subset\,
{\overline G}$ la cl\^oture d'un tore maximal $T\,\subset\, G$. Si $G\not\,=\,
{\rm PSL}(2,{\mathbb C})$, nous montrons que le groupe de tous les automorphismes de la 
vari\'et\'e $\overline T$ est le produit semi-direct $N_G(T)\rtimes D$, o\`u $N_G(T)$ est
le normalisateur de $T$ dans $G$ et $D$ est le groupe de tous les automorphismes du
diagramme de Dynkin. Remarquez que si $G \,=\, {\rm PSL}(2,\mathbb{C})$, alors $\overline{T}
\,= \,{\mathbb C}{\mathbb P}^1$ et donc dans ce cas $\text{Aut}(\overline T) \,= \,
{\rm PSL}(2,\mathbb{C})$.
\end{abstract}

\maketitle

\section{Introduction}

Let $G$ be a simple affine algebraic group of adjoint type defined over the field of 
complex numbers. De Concini and Procesi constructed a very important compactification of 
$G$ \cite[p.~14, 3.1, THEOREM]{DP}; it is known as the wonderful compactification. The 
wonderful compactification of $G$ will be denoted by $\overline G$. Fix a maximal torus 
$T$ of $G$, and denote by $\overline T$ the closure of the variety $T$ in the wonderful 
compactification $\overline G$ \cite[\S~1]{BJ}. Let ${\rm Aut}(\overline{T})$ denote the 
group of all holomorphic automorphisms of $\overline T$. For $G \,\not=\, {\rm 
PSL}(2,\mathbb{C}),$ the connected component of ${\rm Aut}(\overline{T})$ containing the 
identity element coincides with $T$ acting on $\overline T$ by translations \cite[Theorem 
3.1]{BKN}. Our aim here is to compute the full automorphism group ${\rm 
Aut}(\overline{T})$.

It may be noted that $\overline{T}$ is stable under the conjugation of the
normalizer $N_{G}(T)$ of $T$ in $G$. This indicates that
${\rm Aut}(\overline{T})$ need not be connected.

For $G$ different from $ {\rm PSL}(2,\mathbb{C})$, we prove that ${\rm 
Aut}(\overline{T})$ is the semi-direct product $N_G(T)\rtimes D,$ where $N_G(T)$ is the 
normalizer of $T$ in $G,$ and $D$ is the group of all automorphisms of the Dynkin diagram 
(see Theorem \ref{thm2}).

\section{Lie algebra and algebraic groups}

We recall the set-up of \cite{BKN}. Throughout $G$ will denote an affine
algebraic group over $\mathbb{C}$ such that $G$ is simple and of adjoint type (equivalently,
the center of the simple group is trivial). We will always assume
that $G \,\not=\, {\rm PSL}(2,\mathbb{C})$.

Fix a maximal torus $T$ of $G$. The group of all characters of $T$ will be denoted by 
$X(T)$. The Weyl group of $G$ with respect to $T$ is defined to be $W\ :=\, N_{G}(T)/T$, 
where $N_{G}(T)$ is the normalizer of $T$ in $G$. Let
\begin{equation}\label{root}
R \,\subset \,X(T)
\end{equation} 
be the root system of $G$ with respect to $T$.
For a Borel subgroup $B$ of $G$ containing the maximal torus $T$, let 
$R^{+}(B)$ denote the set of positive roots determined by $T$ and $B$. Let
$$
S \,=\, \{\alpha_1\, ,\cdots\, ,\alpha_n\}
$$ 
be the set of simple roots in $R^{+}(B)$, where $n$ is the
rank of $G$. Let $B^{-}$ denote the opposite
Borel subgroup of $G$ determined by $B$ and $T.$ So in particular
$B\bigcap B^{-}\,=\, T$. For any $\alpha \,\in\, R^{+}(B)$, let 
$s_{\alpha} \,\in \, W$ be the 
reflection corresponding to $\alpha$.

The Lie algebras of
$G$, $T$ and $B$ will be denoted by $\mathfrak{g}$, $\mathfrak{t}$ and $\mathfrak{b}$
respectively. The dual of the real form $\mathfrak{t}_{\mathbb R}$ of $\mathfrak{t}$ is
$X(T)\otimes \mathbb{R}\,=\, {\rm Hom}_{\mathbb{R}}(\mathfrak{t}_{\mathbb{R}},\, \mathbb{R})$.

Now, let $\sigma$ be the involution of $G\times G$ defined by $\sigma(x\, ,y)\,=\,(y\, 
,x)$. We note that the diagonal subgroup $\Delta(G)$ of $G\times G$ is the subgroup of 
fixed points of $\sigma$. The subgroup $T\times T\, \subset\, G\times G$ is a 
$\sigma$--stable maximal torus of $G\times G$, while $B\times B^{-}$ is a Borel subgroup 
of $G\times G$; this Borel subgroup $B\times B^{-}$ has the property that 
$\sigma(\alpha)\,\in\, -R^{+}(B\times B^{-})$ for every $\alpha \,\in\, R^{+}(B\times 
B^{-}).$

The group $G$ is identified with the symmetric space $(G\times G)/\Delta(G)$.
Let $\overline{G}$ denote the corresponding wonderful compactification of $G$
(see \cite[p.~14, 3.1, THEOREM]{DP}).
In particular $G\times G$ acts on $\overline{G}$.
Let $\overline{T}$ be the closure of $T$ in $\overline{G}.$
The action of the subgroup $N_{G}(T)\, \subset\, G\,=\, \Delta(G)$ on
$\overline{G}$ preserves $\overline{T}$.

\section{The automorphism group of $\overline{T}$}

Let ${\rm Aut}(\overline{T})$ denote the group of all holomorphic automorphisms of
$\overline{T}$; any holomorphic automorphism is algebraic.
Let ${\rm Aut}^0(\overline{T})\, \subset\, {\rm
Aut}(\overline{T})$ be the connected component containing the identity element. 
The translation action of $T$ on itself produces an isomorphism 
\begin{equation}\label{rh}
\rho\, :\, T \,\longrightarrow\, {\rm Aut}^0(\overline{T})
\end{equation}
\cite[p.~786, Theorem 3.1]{BKN}.

\begin{theorem}\label{thm2}
The automorphism group ${\rm Aut}(\overline{T})$ is the semi-direct product 
$N_G(T)\rtimes D,$ where $N_G(T)$ is the normalizer of $T$ in $G$, and $D$ is the 
group of all automorphisms of the Dynkin diagram of $G$.
\end{theorem}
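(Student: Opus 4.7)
The plan is to construct a natural homomorphism $\iota\colon N_G(T)\rtimes D\to\text{Aut}(\overline T)$, verify its injectivity, and then deduce surjectivity using the identification $\rho\colon T\to\text{Aut}^0(\overline T)$ of (\ref{rh}) together with a description of the component group of $\text{Aut}(\overline T)$ via the symmetries of the fan of the toric variety $\overline T$.

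For the construction of $\iota$: the subgroup $N_G(T)=\Delta(N_G(T))\subset\Delta(G)\subset G\times G$ acts on $\overline G$ through the natural $(G\times G)$-action and preserves $\overline T$. For the Dynkin factor, I would fix a pinning $(T,B,\{x_\alpha\}_{\alpha\in S})$ and lift each $d\in D$ uniquely to an automorphism of $G$ preserving that pinning (hence preserving $T$ and $B$), which extends to $\overline G$ by functoriality of the wonderful compactification. Since $D$ normalizes $N_G(T)$ inside $\text{Aut}(G)$, these two parts assemble into $\iota$. Injectivity is immediate by restriction to the open orbit $T\subset\overline T$: the translations coming from the $T$-component of $N_G(T)$ together with the induced Weyl-plus-diagram action on $T$ are faithful.

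For surjectivity, let $\phi\in\text{Aut}(\overline T)$. Normality of $\text{Aut}^0(\overline T)=\rho(T)$ forces $\phi$ to induce an algebraic automorphism $\bar\phi$ of the torus $T$ that extends to $\overline T$, and hence preserves the fan $\Sigma\subset X_*(T)\otimes\mathbb R$ of the toric variety $\overline T$. The key geometric input is that $\Sigma$ is the Weyl chamber fan of the root system $R$, whose $\mathbb Z$-linear symmetry group equals $\text{Aut}(R)=W\rtimes D$. The composition of $\iota$ with the projection $\text{Aut}(\overline T)\to W\rtimes D$ is surjective, so after multiplying $\phi$ by a suitable element of $\iota(N_G(T)\rtimes D)$ we may assume $\bar\phi=\text{id}_T$. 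Then $\phi$ is $T$-equivariant; its restriction to the open orbit is therefore a translation by some $t\in T$, so $\phi=\rho(t)\in\iota(N_G(T)\rtimes D)$.

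The principal obstacle is the identification of the fan of $\overline T$ with the Weyl chamber fan and the resulting computation of its linear symmetry group as $\text{Aut}(R)=W\rtimes D$; this uses both the explicit local structure of the wonderful compactification along $\overline T$ and the classical fact that every linear automorphism preserving the Weyl chambers comes from $\text{Aut}(R)$. A second delicate point is to ensure that the extension $1\to\rho(T)\to\text{Aut}(\overline T)\to W\rtimes D\to 1$ matches the class of $1\to T\to N_G(T)\rtimes D\to W\rtimes D\to 1$, so that one genuinely obtains the semi-direct product $N_G(T)\rtimes D$ as in the statement, rather than merely a group isomorphic to $T\rtimes(W\rtimes D)$.
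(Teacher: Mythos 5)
Your proposal is correct and follows essentially the same route as the paper: establish the containment $N_G(T)\rtimes D\subset {\rm Aut}(\overline{T})$, use the normality of $\rho(T)={\rm Aut}^0(\overline{T})$ to reduce an arbitrary automorphism modulo translations to one acting linearly on $X(T)$ and preserving the Weyl-chamber fan, and then invoke ${\rm Aut}(R)=W\rtimes D$. The only cosmetic differences are that you realize the $D$-action via pinned automorphisms of $G$ extended to $\overline{G}$ (the paper instead gets it from fan symmetries of the toric variety, citing Cox), and you make explicit the final bookkeeping about the extension class, which the paper leaves implicit.
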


\begin{proof}
For notational convenience denote $$A\,=\,{\rm Aut}(\overline{T})\, .$$
Note that $\overline{T}$ is stable under the 
conjugation action of $N_G(T)$ on $\overline{G}.$ Let 
\begin{equation}\label{fan}
\widetilde{\Delta} \,\subset\, \mathfrak{t}_{\mathbb{R}} 
\end{equation}
be the fan of the toric variety $\overline{T}$. This 
$\widetilde\Delta$ consists of cones associated to the Weyl chambers (see \cite[p.~187, 6.1.6, 
Lemma]{BK}). Note that any automorphism $\sigma$ of the Dynkin Diagram associated to 
set $S \subset R$ of simple roots with respect to $(T,\, B)$ preserves the fan 
$\widetilde\Delta.$ Therefore, we have \cite[p.~47]{Co}
$$N_G(T)\rtimes D\, \subset\, A\,. $$

Next we will show that $N_G(T)\rtimes D \,=\,A$.

Since $\rho$ in \eqref{rh} is an isomorphism, it follows immediately that $T$ is a normal 
subgroup of $A$. Therefore, the intersection $T \bigcap g(T)$ is a $T$ stable open 
dense subset of $\overline{T}$ for every element $g \,\in\, A$. Consequently, the
open subset $T\, \subset\, \overline{T}$ is preserved by the natural action of $A$ 
on $\overline{T}$. Consequently, every automorphism $g\,\in\, A$ can be expressed as 
\begin{equation}\label{ed}
g\,=\,l_{t_0}h\, ,
\end{equation}
where $l_{t_0}$ is the left translation by some $t_0\,\in\, T$, 
and $h\,\in\, A$ satisfies the condition that $h(1)\,=\,1,$ with $1$ being the 
identity element of $T$.

By a result of Rosenlicht, the action of the $h$ (in \eqref{ed}) on $T$ is by group
automorphism (see 
\cite[p.~986, Theorem 3]{MR}). Therefore, $h$ gives an automorphism of $X(T)$, and hence 
$h$ gives an automorphism of $\mathfrak{t}_{\mathbb{R}}$. Since $T$ is left invariant 
under the action of $h$ the toric variety data of $\overline{T}$ is preserved by $h.$ Hence we see that the automorphism of $\mathfrak{t}_{\mathbb{R}}$
given by $h$ preserves the fan $\widetilde\Delta$ in \eqref{fan}. Since $\widetilde\Delta$ is given by the
Weyl chambers and its faces, we see that the induced action of $h$ on $X(T)$ leaves the root system $R$ of
$G$ in \eqref{root} invariant. Consequently, $h$ produces
an automorphism of the root system $R$.

On the other hand, the automorphism group ${\rm Aut}(R)$ of the root 
system $R$ is precisely
$$N_G(T)/T\rtimes D = W \rtimes D$$
(see \cite[p.~231, (A.8)]{HJ1}). 
\end{proof}

\begin{corollary}
The quotient group
${\rm Aut}(\overline{T})/{\rm Aut}^0(\overline{T})$ is isomorphic to
${\rm Aut}(R) = W\rtimes D.$ 
\end{corollary}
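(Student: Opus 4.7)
The plan is to establish both inclusions in the identity ${\rm Aut}(\overline T)=N_G(T)\rtimes D$, writing $A:={\rm Aut}(\overline T)$. The containment $N_G(T)\rtimes D\subseteq A$ is the easier direction: $N_G(T)$ acts on $\overline G$ by conjugation and preserves $\overline T$, while each $\sigma\in D$ acts on $\mathfrak t_{\mathbb R}$ through its action on the simple roots and therefore preserves the fan $\widetilde\Delta$ (which is built from Weyl chambers), so it descends to a toric automorphism of $\overline T$ by standard toric geometry. The semidirect product structure then follows from the standard action of $D$ on $T$ and on $W=N_G(T)/T$.

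For the reverse containment $A\subseteq N_G(T)\rtimes D$, I would first use $\rho\colon T\xrightarrow{\sim}{\rm Aut}^0(\overline T)$ to identify $T$ with the identity component of $A$, which is automatically normal. Normality implies that any $g\in A$ permutes $T$--orbits on $\overline T$; since $T\subset\overline T$ is the unique open $T$--orbit, it follows that $g(T)=T$. Writing $g=l_{t_0}\circ h$ with $t_0:=g(1)\in T$ and $h(1)=1$, Rosenlicht's theorem then promotes $h|_T$ from a variety automorphism to an algebraic group automorphism of the torus $T$.

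The induced lattice automorphism $h^{*}$ of $X(T)$ extends $\mathbb R$--linearly to $\mathfrak t_{\mathbb R}$ and must preserve the fan $\widetilde\Delta$ because $h$ extends to all of $\overline T$; hence it permutes the Weyl chambers and the reflecting hyperplanes $H_{\alpha}$. Since $G$ is of adjoint type, $X(T)=\mathbb Z R$ is the root lattice, and each root is a primitive lattice vector in its own direction, so $h^{*}$ in fact permutes the set $R$. Invoking ${\rm Aut}(R)=W\rtimes D$ and lifting the $W$--part of $h^{*}$ to $N_G(T)$ via $W=N_G(T)/T$, while absorbing $l_{t_0}\in T\subset N_G(T)$, we conclude $g\in N_G(T)\rtimes D$.

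The main obstacle I anticipate is the step that $h^{*}$ permutes the root set $R$ and not merely the reflecting hyperplane arrangement. The required root-system input is that each root is primitive in the adjoint root lattice along its own direction; this is transparent in the simply-laced types but warrants case-by-case verification in $B_{n}$, $C_{n}$, $F_{4}$, $G_{2}$, where short and long roots must be distinguished from their non-root integer multiples. Once this is in hand, the remaining work is a bookkeeping exercise with the exact sequence $1\to T\to N_G(T)\to W\to 1$ and the definition of $D={\rm Aut}(R)/W$.
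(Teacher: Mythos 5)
Your proof is correct and takes essentially the same route as the paper: the corollary there is an immediate consequence of Theorem \ref{thm2} together with ${\rm Aut}^0(\overline{T})\cong T$, and your argument is precisely the paper's proof of that theorem --- normality of $T={\rm Aut}^0(\overline{T})$ forces preservation of the open orbit, Rosenlicht upgrades the stabilizer of $1$ to group automorphisms of $T$, and preservation of the Weyl-chamber fan identifies the component group with ${\rm Aut}(R)=W\rtimes D$. The only addition is your (correct) observation that passing from ``permutes the reflecting hyperplanes'' to ``permutes $R$'' uses that each root is primitive in the adjoint character lattice $X(T)=\mathbb{Z}R$, a point the paper leaves implicit.
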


\begin{remark}\label{rm1}
The automorphism group $D$ is trivial
except for the types $A_{\ell}$ with $\ell \,\geq\, 2$, $D_{\ell}$ and $E_6$
(see \cite[p.~231, (A.8)]{HJ1}).
\end{remark}

\begin{remark}
We note that the structure of 
the automorphism group of a complete simplicial toric variety
is described by D. A. Cox (see \cite[p.~48, Corallary 4.7]{Co}).
\end{remark}

\section*{Acknowledgements}

We thank the referee for very helpful comments. 
The first--named author thanks the Institute of Mathematical Sciences for 
hospitality while this work was carried out. He also acknowledges the support of the 
J. C. Bose Fellowship. The second named author would like to thank the Infosys 
Foundation for the partial support.

\end{document}